\newtheorem{theorem}{Theorem}[section]
\newtheorem{lemma}{Lemma}[section]
\numberwithin{equation}{section}
\theoremstyle{definition}
\theoremstyle{remark}
\begin{document}
\title{Some completely monotonic functions involving the polygamma functions}
\author{Peng Gao}
\address{Division of Mathematical Sciences, School of Physical and Mathematical Sciences,
Nanyang Technological University, 637371 Singapore}
\email{penggao@ntu.edu.sg}
\subjclass[2000]{Primary 33B15} \keywords{completely monotonic
function, polygamma functions}

\begin{abstract}
Motivated by existing results, we present some completely
monotonic functions involving the polygamma functions.
\end{abstract}

\maketitle
\section{Introduction}
\label{sec 1} \setcounter{equation}{0}
   The digamma (or psi) function  $\psi(x)$ for $x>0$ is defined to be the logarithmic derivative of
    Euler's gamma function
\begin{equation*}
    \Gamma(x)=\int^{\infty}_0 t^x e^{-t}\frac
    {dt}{t}.
\end{equation*}
    The function $\psi$ and its derivatives are called polygamma
    functions.

   There are many interesting inequalities involving the polygamma functions in the literature,
   many of which are closely related to the fact that $\psi'$ is completely
    monotonic on $(0, +\infty)$. Here we recall that a function $f(x)$ is
said to be completely monotonic on $(a, b)$ if it has derivatives
of all orders and $(-1)^kf^{(k)}(x) \geq 0, x \in (a, b), k \geq
0$ and $f(x)$ is said to be strictly completely monotonic on $(a,
b)$ if $(-1)^kf^{(k)}(x) > 0, x \in (a, b), k \geq 0$.

    A general result of Fink \cite[Theorem 1]{F} on completely monotonic functions
    implies that for integers $n \geq 2$,
\begin{align*}
    \left ( \psi^{(n)}(x) \right )^2 \leq
    \psi^{(n-1)}(x)\psi^{(n+1)}(x),  \ x>0.
\end{align*}

     The following inequality of the reverse direction is
     given in \cite{T&W&W}:
\begin{align*}
    \frac {1}{2}\psi^{'}(x)\psi^{'''}(x) \leq \left ( \psi^{''}(x) \right )^2,  \ x>0.
\end{align*}
   A short proof of the above inequality is given in \cite{E&R}.

    For integers $p \geq m \geq n \geq q \geq 0$ and any real number $s$, we
   define
\begin{equation*}
   F_{p, m,n,q}(x; s)= (-1)^{m+n}\psi^{(m)}(x)\psi^{(n)}(x) -
   s(-1)^{p+q}\psi^{(p)}(x)\psi^{(q)}(x),
\end{equation*}
   where we set $\psi^{(0)}(x)=-1$ for convenience.

   In \cite[Theorem 2.1]{A&W}, Alzer and Wells established a nice
    generalization of the above results. Their result  asserts that for $n \geq 2$,
   the function $F_{n+1, n, n, n-1}(x; s)$ is strictly completely monotonic on $(0, +\infty)$ if and only if $s \leq
   (n-1)/n$ and $-F_{n+1, n, n, n-1}(x; s)$ is strictly completely monotonic on $(0, +\infty)$ if and only if $s \geq
   n/(n+1)$.

   We denote
\begin{equation*}
  \alpha_{p,m,n,q} =
     \frac {(m-1)!(n-1)!}{(p-1)!(q-1)!}, \  q \geq 1;
     \ \alpha_{p,m,n,0} =  \frac {(m-1)!(n-1)!}{(p-1)!}; \
\beta_{p,m,n,q} =
     \frac {m!n!}{p!q!}.
\end{equation*}
  Note that $0<\alpha_{p,m,n,q}, \beta_{p,m,n,q}<1$ when $p+q=m+n, p>m$.

   In \cite[Theorem 5.1]{Gao1}, the following generalization of the result of Alzer and Wells
   is given:
\begin{theorem}
\label{thm4}
   Let $p>m \geq n >q \geq 0$ be integers satisfying $m+n=p+q$. The
   function
   $F_{p, m,n,q}(x; \alpha_{p,m,n,q})$ is completely monotonic on $(0, +\infty)$. The function $-F_{p, m,n,q}(x; \beta_{p, m,n, q})$
   is also completely monotonic on $(0, +\infty)$ when $q>0$.
\end{theorem}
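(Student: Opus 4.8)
The plan is to pass to Laplace transforms, reduce complete monotonicity to pointwise integral inequalities, strip those off one index-step at a time via a telescoping identity for the constants, and close each step by an integration by parts followed by Chebyshev's integral inequality.

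\textbf{Reduction to integral inequalities.} For $k\ge1$ one has $(-1)^{k+1}\psi^{(k)}(x)=\int_0^\infty t^k e^{-xt}\,dt/(1-e^{-t})$. Put $a_k(x)=(-1)^{k+1}\psi^{(k)}(x)$, so $a_0\equiv1$ and, because $m+n=p+q$, $F_{p,m,n,q}(x;s)=a_m(x)a_n(x)-s\,a_p(x)a_q(x)$. Each $a_k$ with $k\ge1$ is the Laplace transform of the nonnegative function $g_k(t)=t^k/(1-e^{-t})$ (hence is completely monotonic); set $g_0=\delta_0$. By the convolution theorem $a_ma_n-s\,a_pa_q$ is the Laplace transform of $g_m*g_n-s\,g_p*g_q$, so by the (elementary sufficiency half of) Bernstein's theorem it is completely monotonic whenever
\[
h_{m,n}(t)\ \ge\ s\,h_{p,q}(t)\quad(t>0),\qquad h_{a,b}(t):=(g_a*g_b)(t)=\int_0^t\frac{u^a(t-u)^b}{(1-e^{-u})(1-e^{-(t-u)})}\,du ,
\]
with the convention $h_{a,0}=g_a$. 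Thus it suffices to prove $h_{m,n}\ge\alpha_{p,m,n,q}h_{p,q}$ and $h_{p,q}\ge\beta_{p,m,n,q}^{-1}h_{m,n}$ pointwise on $(0,\infty)$.

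\textbf{Telescoping and reduction to single steps.} For $q\ge1$ one checks along the path $(m,n)\to(m+1,n-1)\to\dots\to(p,q)$ that $\alpha_{p,m,n,q}=\prod_i\frac{n-i-1}{m+i}$ and, along the reversed path, $\beta_{p,m,n,q}^{-1}=\prod_i\frac{m+i+1}{n-i}$. Since a nonnegative combination of nonnegative quantities is nonnegative, it is enough to establish the single-step inequalities
\[
a\,h_{a,b}(t)\ \ge\ (b-1)\,h_{a+1,b-1}(t),\qquad b\,h_{a+1,b-1}(t)\ \ge\ (a+1)\,h_{a,b}(t)\qquad(a\ge b\ge2,\ t>0).
\]
The case $q=0$ is reduced to the $q=1$ case by writing $a_ma_n-\alpha_{m+n,m,n,0}a_{m+n}=\bigl[a_ma_n-\alpha_{m+n-1,m,n,1}a_{m+n-1}a_1\bigr]+\alpha_{m+n-1,m,n,1}\bigl[a_{m+n-1}a_1-\tfrac1{m+n-1}a_{m+n}\bigr]$ and recursing on $n$, down to the terminal estimate $h_{N,1}(t)\ge N^{-1}g_{N+1}(t)$. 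For $a=b$ the first single-step inequality is precisely the relevant part of the Alzer--Wells theorem recalled above; the argument below treats all $a\ge b$ uniformly.

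\textbf{Proof of the single steps.} Let $W(u,t)=(1-e^{-u})(1-e^{-(t-u)})$, so $\partial_uW=e^{-u}-e^{-(t-u)}$. Integrating $\partial_u\bigl[u^{a+1}(t-u)^b/W\bigr]$ over $(0,t)$ (the endpoint contributions vanish since $a\ge1$, $b\ge2$) gives
\[
(a+1)h_{a,b}(t)-b\,h_{a+1,b-1}(t)=\int_0^t\frac{u^{a+1}(t-u)^b\bigl(e^{-u}-e^{-(t-u)}\bigr)}{W(u,t)^2}\,du .
\]
Folding the right side about $t/2$ shows it is $\le0$; this is the reverse single step. For the forward step, add to this identity the elementary one $h_{a+1,b-1}-h_{a,b}=\int_0^{t/2}\frac{u^{b-1}(t-u)^{b-1}(t-2u)\bigl((t-u)^{a-b+1}-u^{a-b+1}\bigr)}{W(u,t)}\,du\ (\ge0)$, fold the first integral too, and factor out the common nonnegative weight; the forward step then follows once the remaining bracket is nonnegative, i.e. (with $\alpha=u\le\beta=t-u$)
\[
(\beta-\alpha)(1-e^{-\alpha})(1-e^{-\beta})\ \ge\ \alpha\beta\bigl(e^{-\alpha}-e^{-\beta}\bigr),\qquad 0<\alpha\le\beta .
\]
Setting $u=e^{\alpha}>1$, $v=e^{\beta}>1$ and using $\int_0^1\frac{dr}{1+r(u-1)}=\frac{\ln u}{u-1}$, $\int_0^1\frac{dr}{1+r(v-1)}=\frac{\ln v}{v-1}$, and (by partial fractions) $\int_0^1\frac{dr}{(1+r(u-1))(1+r(v-1))}=\frac{\ln v-\ln u}{v-u}$, this last inequality becomes exactly Chebyshev's integral inequality $\int_0^1 fg\ge\int_0^1 f\int_0^1 g$ for the two decreasing functions $f(r)=(1+r(u-1))^{-1}$, $g(r)=(1+r(v-1))^{-1}$ on $[0,1]$.

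\textbf{Main obstacle.} The first two steps are formal, and the reverse single step is a one-line consequence of the integration-by-parts identity. The crux is the forward single step: one has to choose the right antiderivative to differentiate, recombine the resulting identity with the trivial bound $h_{a+1,b-1}\ge h_{a,b}$ so that after extracting the common weight the leftover is genuinely pointwise, and then recognize that pointwise inequality as an instance of Chebyshev's. The only other real work is the terminal estimate $h_{N,1}(t)\ge N^{-1}g_{N+1}(t)$ for $q=0$, equivalently $\int_0^t u^{N-1}\phi(u)\phi(t-u)\,du\ge \tfrac{t^N}{N}\phi(t)$ with $\phi(s)=s/(1-e^{-s})$; this is elementary but slightly delicate (for instance via an integration by parts together with the monotonicity of $u\mapsto(t-u)/W(u,t)$).
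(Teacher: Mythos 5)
Your argument is correct, and it is genuinely different from the one in the paper (which for this statement defers to \cite{Gao1}; the method there, reproduced in Section 3 here in the more general divided-difference setting, is the Alzer--Wells one). The paper symmetrizes the convolution integral about $s=1/2$, invokes Lemma \ref{lem5} to show the polynomial factor changes sign exactly once, multiplies by a monotone weight, and concludes because the unweighted integral vanishes by the beta-function identity; this handles the extremal constants $\alpha_{p,m,n,q}$ and $\beta_{p,m,n,q}$ in a single stroke and is what makes the $c$-generalization of Theorem \ref{thm1} go through. You instead telescope through the intermediate pairs $(m+i,n-i)$ and prove each one-index step by integration by parts plus a folding argument, reducing everything to the pointwise inequality $(\beta-\alpha)(1-e^{-\alpha})(1-e^{-\beta})\ge\alpha\beta(e^{-\alpha}-e^{-\beta})$, which you correctly recognize as Chebyshev's integral inequality for the two decreasing kernels $(1+r(u-1))^{-1}$, $(1+r(v-1))^{-1}$; I have checked that the step constants do multiply out to $\alpha_{p,m,n,q}$ and $\beta_{p,m,n,q}^{-1}$, and that the intermediate pairs all satisfy $a\ge b\ge 2$ under the hypotheses. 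What your route buys is modularity and the stronger chain of intermediate inequalities $a\,h_{a,b}\ge(b-1)h_{a+1,b-1}\,$; what it costs is a separate treatment of $q=0$ and the terminal estimate $h_{N,1}\ge N^{-1}g_{N+1}$, which you state but do not prove. That estimate is true, but your suggested route via monotonicity of $(t-u)/W(u,t)$ is shaky (that function blows up at $u=0$); the clean proof is the pointwise bound $\phi(u)\phi(t-u)\ge\phi(t)$ for $\phi(s)=s/(1-e^{-s})$, which follows since $(\ln\phi)'$ is decreasing (equivalently $s\le 2\sinh(s/2)$) and is precisely the $c\to0^+$ limit of the inequality $\ln h_c(s)+\ln h_c(t-s)-\ln h_c(t)\ge\ln c$ established in Section 3 of this paper; combined with $\int_0^t u^{N-1}\,du=t^N/N$ it finishes the $q=0$ case.
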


    For a given function $f(x)$, we denote for $c>0$,
\begin{align*}
   \Delta f(x;c)=\frac {f(x+c)-f(x)}{c}.
\end{align*}
    We define for integers $p \geq m \geq n \geq q \geq 0$, real number $c>0$ and any real number
    $s$,
\begin{align*}
    F_{p, m,n,q}(x; s;c)= (-1)^{m+n}\Delta\psi^{(m-1)}(x;c)\Delta\psi^{(n-1)}(x;c) -
    s(-1)^{p+q}\Delta\psi^{(p-1)}(x;c)\Delta\psi^{(q-1)}(x;c),
\end{align*}
   where we set $\psi^{(0)}(x)=\psi(x), \psi^{(-1)}(x)=-x$ for
   convenience. We further define $F_{p, m,n,q}(x; s;0)=\lim_{c
   \rightarrow 0^+}F_{p, m,n,q}(x; s;c)$ and it is then easy to
   see that $F_{p, m,n,q}(x; s;0)=F_{p, m,n,q}(x; s)$.

   It's shown in \cite{Q&G} that on $(-\min (s,t), +\infty)$, the function $F_{2,1,1,0}(x+s;
   1;t-s)$ (resp. its negative) is completely monotonic when $|t-s|<1$ (resp. when $|t-s|<1$) and it is
   further given in \cite{Q&G2} a necessary and sufficient condition on $\lambda, t, s$ for $F_{2,1,1,0}(x+s;
   \lambda;t-s)$ or it's negative to be completely monotonic on $(-\min (s,t),
   +\infty)$. We point out here that one can easily deduce these results on $F_{2,1,1,0}(x+s;
   \lambda;t-s)$ from similar results on $F_{2,1,1,0}(x;
   \lambda;t)$ by a change of variable.

   Motivated by the above results, it is our goal in this paper to prove the following:
\begin{theorem}
\label{thm1}
    Let $p>m \geq n >q \geq 0$ be integers satisfying $m+n=p+q$ and let $c>0$. Then
\begin{enumerate}
\item For $0< c \leq 1$,
\begin{enumerate}
\item The function $F_{p, m,n,q}(x; s;c)$ is completely monotonic
on $(0, +\infty)$ if and only if $s \leq \alpha_{p,m,n,q}$.  \item
The function $-F_{m+n, m,n,0}(x; s;c)$ is completely monotonic on
$(0, +\infty)$ if and only if $s \geq \alpha_{m+n,m,n,0}/c$.
\end{enumerate}
\item For $c \geq 1$,
\begin{enumerate}
\item The function $-F_{p, m,n,q}(x; s;c)$ is completely monotonic
on $(0, +\infty)$ if and only if $s \geq \alpha_{p,m,n,q}$.  \item
The function $F_{m+n, m,n,0}(x; s;c)$ is completely monotonic on
$(0, +\infty)$ if and only if $s \leq \alpha_{m+n,m,n,0}/c$.
\end{enumerate}
\item The function $-F_{p, m,n,q}(x; \beta_{p, m,n, q};c)$ is
completely monotonic on $(0, +\infty)$ for all $c>0$ when $q \geq
1$ .
\end{enumerate}

\end{theorem}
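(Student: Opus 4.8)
\medskip
\noindent\textbf{Proof proposal.} The plan is to pass to Laplace transforms, reduce every assertion to a pointwise inequality for $t>0$, and settle that inequality by a monotone--rearrangement argument. From $\psi^{(k-1)}(x)=(-1)^k\int_0^\infty\frac{t^{k-1}}{1-e^{-t}}e^{-xt}\,dt$ for $k\ge 2$ (and the analogous representation of $\psi$ for $k=1$) one checks that, for every integer $k\ge1$ and $c>0$, $(-1)^{k-1}\Delta\psi^{(k-1)}(x;c)=\int_0^\infty g_k(t;c)\,e^{-xt}\,dt$ with $g_k(t;c)=\frac{t^{k-1}(1-e^{-ct})}{c(1-e^{-t})}>0$, while $\Delta\psi^{(-1)}(x;c)=-1$. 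Since a product of Laplace transforms is the Laplace transform of the convolution, $F_{p,m,n,q}(x;s;c)=\int_0^\infty\bigl[(g_m*g_n)(t;c)-s\,(g_p*g_q)(t;c)\bigr]e^{-xt}\,dt$ when $q\ge1$, and $F_{m+n,m,n,0}(x;s;c)=\int_0^\infty\bigl[(g_m*g_n)(t;c)-s\,g_{m+n}(t;c)\bigr]e^{-xt}\,dt$. By Bernstein's theorem (complete monotonicity $\Leftrightarrow$ being the Laplace transform of a nonnegative measure) and uniqueness of the representing measure, since the integrands are continuous in $t$, each claim of Theorem \ref{thm1} is equivalent to the statement that $s$ is $\le\inf_{t>0}$ (or $\ge\sup_{t>0}$) of a ratio of the two brackets. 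Writing $u=tv$ and using $m+n=p+q$, this ratio equals, for $q\ge1$,
\[
R(t;c)=\frac{\int_0^1\varphi(v)\,w_{t,c}(v)\,dv}{\int_0^1\eta(v)\,w_{t,c}(v)\,dv},\qquad h(t;c)=\frac{1-e^{-ct}}{c(1-e^{-t})},\quad w_{t,c}(v)=h(tv;c)\,h\bigl(t(1-v);c\bigr),
\]
where $\varphi,\eta$ are the symmetrizations about $v=\tfrac12$ of $v^{m-1}(1-v)^{n-1}$ and of $v^{p-1}(1-v)^{q-1}$ (legitimate because $w_{t,c}$ is symmetric about $\tfrac12$); for $q=0$ the relevant ratio is $R_0(t;c)=\int_0^1 v^{m-1}(1-v)^{n-1}\,\frac{h(tv;c)\,h(t(1-v);c)}{h(t;c)}\,dv$.

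I would then record the elementary behaviour of $h$. Using $\frac{d}{dt}\log h(t;c)=\frac{c}{e^{ct}-1}-\frac1{e^{t}-1}$ and $\frac{u^2 e^u}{(e^u-1)^2}=\left(\frac{u/2}{\sinh(u/2)}\right)^2$ together with the fact that $x\mapsto\frac{\sinh x}{x}$ is increasing, one obtains: for $0<c\le1$ the function $h(\cdot;c)$ is increasing and $\log h(\cdot;c)$ is concave, for $c\ge1$ both reverse, and in all cases $h(0^+;c)=1$, $h(+\infty;c)=1/c$. Hence $w_{t,c}$ is symmetric about $\tfrac12$ and unimodal, so increasing on $[0,\tfrac12]$ when $c\le1$ and decreasing there when $c\ge1$; moreover, since $\partial_c\log h(t;c)=\frac1c\bigl(\frac{ct}{e^{ct}-1}-1\bigr)$ and $u\mapsto\frac{u}{e^u-1}$ is convex (equivalently $x\coth x\ge1$, which follows from $\tanh x\le x$), the map $v\mapsto\partial_c\log w_{t,c}(v)$ is decreasing on $[0,\tfrac12]$. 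The crux is the purely algebraic claim that $v\mapsto\varphi(v)/\eta(v)$ is increasing on $(0,\tfrac12)$: after an elementary factorization and the substitutions $v=\frac{1-y}{2}$, $r=\frac{1+y}{1-y}$ one finds $\frac{\varphi(v)}{\eta(v)}=\frac{\cosh(\gamma\log r)}{\cosh(\beta\log r)}$ with $0\le\gamma=\frac{m-n}{2}<\beta=\gamma+(p-m)$, which is decreasing in $\log r$ since $\beta\tanh(\beta z)\ge\gamma\tanh(\gamma z)$; unwinding the substitutions ($v$ decreasing in $y$, $r$ increasing in $y$) gives the claim. This is the step that genuinely uses $m+n=p+q$.

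Combining these, $R(t;c)$ equals the same ratio with integrals over $[0,\tfrac12]$, and differentiating in $c$ and symmetrizing the resulting double integral shows that $\partial_c R(t;c)$ has the sign of $\iint_{[0,1/2]^2}\bigl[\partial_c\log w_{t,c}(v_1)-\partial_c\log w_{t,c}(v_2)\bigr]\bigl[\tfrac{\varphi(v_1)}{\eta(v_1)}-\tfrac{\varphi(v_2)}{\eta(v_2)}\bigr]\,d\mu(v_1,v_2)$ for a positive measure $\mu$, and the two bracketed factors have opposite signs by the previous paragraph, so $R(t;c)$ is \emph{decreasing} in $c$. (Equivalently, Chebyshev's integral inequality applied to the monotone functions $w_{t,c}$ and $\varphi/\eta$ gives $R(t;c)\ge R(t;1)$ for $c\le1$ and $R(t;c)\le R(t;1)$ for $c\ge1$.) Since $R(t;1)=\alpha_{p,m,n,q}$ (at $c=1$, $h\equiv1$) and $\lim_{t\to0^+}R(t;c)=\lim_{t\to+\infty}R(t;c)=\alpha_{p,m,n,q}$ (from $h(0^+;c)=1$, $h(+\infty;c)=1/c$), it follows that $\inf_{t>0}R(t;c)=\alpha_{p,m,n,q}$ for $0<c\le1$ and $\sup_{t>0}R(t;c)=\alpha_{p,m,n,q}$ for $c\ge1$, which is (1)(a) and (2)(a) when $q\ge1$. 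For $q=0$: log-concavity of $h$ with $\log h(0^+;c)=0$ gives $h(tv;c)h(t(1-v);c)\ge h(t;c)$, and monotonicity of $h$ with $\sup h=1/c$ gives $h(tv;c)h(t(1-v);c)\le h(t;c)/c$ (both reversing for $c\ge1$), whence $\alpha_{m+n,m,n,0}\le R_0(t;c)\le\alpha_{m+n,m,n,0}/c$ for $0<c\le1$; together with $R_0(0^+;c)=\alpha_{m+n,m,n,0}$ and $R_0(+\infty;c)=\alpha_{m+n,m,n,0}/c$ this pins down $\inf_t R_0$ and $\sup_t R_0$ and yields (1)(a),(1)(b),(2)(a),(2)(b) for $q=0$. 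Finally, since $R$ is decreasing in $c$, $\sup_{c>0}R(t;c)=\lim_{c\to0^+}R(t;c)=R(t;0)$, while Theorem \ref{thm4}, read through the $c=0$ representation, asserts exactly $\beta_{p,m,n,q}\ge\sup_{t>0}R(t;0)$; hence $\sup_{t>0}R(t;c)\le\beta_{p,m,n,q}$ for every $c>0$, giving (3).

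The main obstacle is the ratio claim in the second paragraph, namely that $\varphi/\eta$ is increasing on $(0,\tfrac12)$: one must discover the substitution that turns it into a ratio of hyperbolic cosines, and this is precisely the point where the condition $m+n=p+q$ is indispensable. Everything else --- the convexity and monotonicity lemmas for $h$, the sign bookkeeping in the Chebyshev and symmetrization steps, the justification of differentiating under the integral sign, and the limit evaluations of $R$ and $R_0$ at $0$ and $\infty$ --- is routine but must be carried out attentively, since all the relevant inequalities flip as $c$ crosses $1$.
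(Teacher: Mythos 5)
Your proposal is correct; I checked the two non‑obvious computations it rests on (the monotonicity of the symmetrized ratio $\varphi/\eta$, which after the substitution $r=v/(1-v)$ reduces to the decrease of $\cosh(\gamma z)/\cosh(\beta z)$ for $0\le\gamma<\beta$, and the convexity of $u\mapsto u/(e^u-1)$, equivalent to $x\coth x\ge 1$), and both go through, as do the limit evaluations that pin down $\inf_t$ and $\sup_t$ of the density ratios. The skeleton --- Laplace transforms, convolutions, symmetrization about $v=1/2$, and monotonicity of the weight $h(tv;c)h(t(1-v);c)$ on $[0,1/2]$ via log-concavity/convexity of $h$ --- coincides with the paper's, but you package the key steps differently. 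Where the paper invokes Lemma \ref{lem5} (a single root of $a(t;m,n,c)$ for $t\ge 1$) together with a single-crossing argument (replace the monotone weight by its value at the crossing point, so the integral vanishes by the beta identity), you use monotonicity of $\varphi/\eta$ plus Chebyshev's correlation inequality; these are equivalent devices, though yours makes the role of $m+n=p+q$ more transparent. For the necessity you let $t\to 0^+$ and $t\to+\infty$ in the density ratio rather than using the paper's $x\to+\infty$ asymptotics of $\psi^{(k)}$ and the $x\to 0^+$ recurrence, which is arguably cleaner since sufficiency and necessity then come out of one computation. The genuinely different piece is part (3): the paper reruns the convolution argument with a modified kernel and shows $s\mapsto a^{-2}(1-s^2)^{-1}u(s;a,c)$ is increasing for all $c>0$, whereas you prove that the density ratio $R(t;\cdot)$ is decreasing in $c$ (a covariance argument resting on the convexity of $u/(e^u-1)$) and then reduce to the $c\to 0^+$ limit, which is exactly Theorem \ref{thm4}. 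Your route buys a stronger intermediate statement --- monotonicity of $R$ in $c$, which also subsumes (1)(a) and (2)(a) since $R(t;1)=\alpha_{p,m,n,q}$ --- at the cost of leaning on the external Theorem \ref{thm4}, which the paper's self-contained treatment of (3) does not use.
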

\section{Lemmas}
\label{sec 3} \setcounter{equation}{0}
   The first Lemma lists some facts about the polygamma functions. These can be found, for example, in
  \cite[(1.1)-(1.3), (1.5)]{alz2}:
\begin{lemma}
\label{lem3.0}
  For $x>0$,
\begin{eqnarray}
\label{1.11}
   \psi(x) &=& -\gamma+\int^{\infty}_0\frac {e^{-t}-e^{-xt}}{1-e^{-t}}dt, \ \gamma=0.57721...; \\
 \label{1.10}
  (-1)^{n+1}\psi^{(n)}(x) &=& \int^{\infty}_0e^{-xt}\frac {t^n}{1-e^{-t}}dt
  , \hspace{0.1in} n \geq
  1; \\
\label{3.4}
  \psi^{(n)}(x+1) &=& \psi^{(n)}(x)+(-1)^n\frac {n!}{x^{n+1}},
  \hspace{0.1in} n \geq 0; \\
\label{1.2}
  (-1)^{n+1}\psi^{(n)}(x) &=& \frac
  {(n-1)!}{x^n}+\frac{n!}{2x^{n+1}}+O(\frac{1}{x^{n+2}}), \hspace{0.1in} n \geq 1, \hspace{0.1in} x
\rightarrow +\infty.
\end{eqnarray}
\end{lemma}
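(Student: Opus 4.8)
The plan is to derive all four assertions from the integral definition of $\Gamma$ given in the introduction together with standard analytic manipulations, treating \eqref{1.11} as the master formula from which the remaining three follow.

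First I would establish \eqref{1.11}. The classical partial-fraction expansion $\psi(x)=-\gamma+\sum_{n=0}^{\infty}\bigl(\frac{1}{n+1}-\frac{1}{n+x}\bigr)$, obtained by logarithmic differentiation of the Weierstrass product for $1/\Gamma$, is the starting point. Writing each reciprocal as a Laplace integral $\frac1a=\int_0^\infty e^{-at}\,dt$ for $a>0$, the $n$th summand becomes $\int_0^\infty\bigl(e^{-(n+1)t}-e^{-(n+x)t}\bigr)\,dt$. Collecting the partial sums under the integral by means of the geometric series $\sum_{n=0}^{N}e^{-nt}=(1-e^{-(N+1)t})/(1-e^{-t})$, and then passing to the limit by dominated convergence (the integrand is dominated by $|e^{-t}-e^{-xt}|/(1-e^{-t})$, which is integrable on $(0,\infty)$ since its numerator is $O(t)$ near $0$), collapses the series to $\int_0^\infty\frac{e^{-t}-e^{-xt}}{1-e^{-t}}\,dt$, which is \eqref{1.11}.

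Next, \eqref{1.10} follows by differentiating \eqref{1.11} $n$ times under the integral sign. Since $\partial_x^n e^{-xt}=(-t)^n e^{-xt}$ and the terms $-\gamma$ and $e^{-t}$ are annihilated by a single $x$-derivative, one gets $\psi^{(n)}(x)=(-1)^{n+1}\int_0^\infty\frac{t^n e^{-xt}}{1-e^{-t}}\,dt$. The interchange of $\partial_x^n$ and $\int$ is legitimate because on each compact subinterval $[a,b]\subset(0,\infty)$ the integrand and all its $x$-derivatives are dominated by $t^n e^{-at}/(1-e^{-t})$, which is integrable: the factor $t^n/(1-e^{-t})\sim t^{n-1}$ as $t\to0^+$, while the exponential forces rapid decay as $t\to\infty$.

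The recurrence \eqref{3.4} is then immediate from \eqref{1.10}: subtracting the representations at $x+1$ and $x$ gives $(-1)^{n+1}\bigl[\psi^{(n)}(x)-\psi^{(n)}(x+1)\bigr]=\int_0^\infty t^n e^{-xt}\,dt=n!/x^{n+1}$ (the factor $1-e^{-t}$ cancels), which rearranges to \eqref{3.4}; for $n=0$ this is the logarithmic derivative of $\Gamma(x+1)=x\Gamma(x)$. Finally, \eqref{1.2} is read off from \eqref{1.10} by Watson's lemma: the expansion $\frac{1}{1-e^{-t}}=\frac1t+\frac12+\frac{t}{12}+\cdots$ near $t=0$ gives $\frac{t^n}{1-e^{-t}}=t^{n-1}+\frac12 t^n+O(t^{n+1})$, and integrating term by term against $e^{-xt}$ via $\int_0^\infty t^\alpha e^{-xt}\,dt=\Gamma(\alpha+1)/x^{\alpha+1}$ yields $(-1)^{n+1}\psi^{(n)}(x)=\frac{(n-1)!}{x^n}+\frac{n!}{2x^{n+1}}+O(x^{-n-2})$ as $x\to+\infty$. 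The main obstacle is not any individual computation but the rigorous control of the three limiting interchanges — series versus integral in \eqref{1.11}, differentiation versus integral in \eqref{1.10}, and the truncated asymptotics underlying Watson's lemma in \eqref{1.2} — all of which reduce to the single integrability fact that $t^n/(1-e^{-t})$ is integrable against $e^{-xt}$ near $0$ while $e^{-xt}$ supplies uniform decay at infinity over compact ranges of $x$.
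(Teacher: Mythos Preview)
Your argument is correct and self-contained: you recover \eqref{1.11} from the Weierstrass product via term-by-term Laplace integration, then obtain \eqref{1.10} by differentiation under the integral, \eqref{3.4} by subtraction, and \eqref{1.2} by Watson's lemma, with adequate justification of each interchange.

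The paper, however, does not prove this lemma at all. It simply states the four formulas and refers the reader to \cite[(1.1)--(1.3), (1.5)]{alz2} for their origin. So there is no ``paper's own proof'' to compare against; you have supplied what the paper chose to quote. Your derivation is the standard one and would be an appropriate substitute for the citation if a self-contained treatment were desired.
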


\begin{lemma}[{\cite[Lemma 2.9]{Gao1}}]
\label{lem5}
   Let $m>n\geq 1$ be two integers, then for any fixed constant $0<c<1$,
   the function
\begin{equation*}
   a(t;m,n,c)=t^{m-n}+t^n-c(1+t^m)
\end{equation*}
   has exactly one root when $t \geq 1$.
\end{lemma}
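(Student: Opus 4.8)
The plan is to peel off a manifestly positive factor and reduce the assertion to the monotonicity of a single rational function. Writing
\[
a(t;m,n,c)=(1+t^m)\bigl(g(t)-c\bigr),\qquad g(t):=\frac{t^{m-n}+t^n}{1+t^m},
\]
and noting $1+t^m>0$, the roots of $a(\cdot;m,n,c)$ in $[1,\infty)$ are exactly the solutions of $g(t)=c$ there. So it suffices to show that $g$ is continuous and strictly decreasing on $[1,\infty)$ with $g(1)>c>\lim_{t\to\infty}g(t)$; then the intermediate value theorem gives a unique crossing.

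I would carry this out in three steps. First, monotonicity of $g$: I would compute $g'(t)$, whose sign is that of the numerator
\[
N(t):=\bigl((m-n)t^{m-n-1}+nt^{n-1}\bigr)(1+t^m)-mt^{m-1}(t^{m-n}+t^n),
\]
and then collect powers of $t$; the key (and only genuinely computational) point is that this telescopes into
\[
N(t)=(m-n)t^{m-n-1}\bigl(1-t^{2n}\bigr)+nt^{n-1}\bigl(1-t^{2(m-n)}\bigr).
\]
Since $m-n\ge1$ and $n\ge1$, for $t>1$ both $1-t^{2n}$ and $1-t^{2(m-n)}$ are negative while the prefactors are positive, so $N(t)<0$ on $(1,\infty)$ and hence $g$ is strictly decreasing on $[1,\infty)$. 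Second, the endpoint values: $g(1)=\tfrac{1+1}{1+1}=1>c$, and $g(t)\to0$ as $t\to\infty$ because the numerator has degree $\max(m-n,n)<m$, the degree of the denominator (here $m>n\ge1$ is used). Third, conclude: a continuous strictly decreasing function on $[1,\infty)$ taking the value $1>c$ at the left endpoint and limiting to $0<c$ attains $c$ exactly once, so $a(\cdot;m,n,c)$ has exactly one root in $[1,\infty)$; it lies in $(1,\infty)$ since $a(1;m,n,c)=2(1-c)\ne0$.

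I do not expect a real obstacle here: the whole proof hinges on the clean factorization of $N(t)$ above, after which the argument is immediate. As a sanity check / alternative, one can instead invoke Descartes' rule of signs — ordered by ascending powers, $a(t;m,n,c)$ has coefficient-sign pattern $-,+,+,-$ (or $-,+,-$ when $m=2n$), hence at most two positive roots, and since $a(0;m,n,c)=-c<0<a(1;m,n,c)$ while $a(t;m,n,c)\to-\infty$, there is exactly one root in $(0,1)$ and exactly one in $(1,\infty)$ — but the monotonicity route is cleaner and sidesteps bookkeeping of multiplicities.
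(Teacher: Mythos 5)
Your proof is correct. Note that the paper itself gives no argument for this lemma---it is imported verbatim from \cite[Lemma 2.9]{Gao1}---so there is no in-text proof to compare against; your write-up serves as a self-contained replacement for the citation. The one genuinely computational step checks out: expanding the numerator of $g'$ gives
\[
N(t)=(m-n)t^{m-n-1}+nt^{n-1}-nt^{2m-n-1}-(m-n)t^{m+n-1},
\]
which regroups exactly as your claimed factorization $(m-n)t^{m-n-1}(1-t^{2n})+nt^{n-1}\bigl(1-t^{2(m-n)}\bigr)$, and the hypotheses $m>n\geq 1$ are precisely what make both brackets negative for $t>1$ and both prefactors positive, as well as what forces $\deg(t^{m-n}+t^n)=\max(m-n,n)<m$ so that $g(t)\to 0$. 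The endpoint values $g(1)=1>c>0=\lim_{t\to\infty}g(t)$ then give existence and uniqueness by strict monotonicity. Your Descartes-rule alternative is also sound (two sign changes, one root already forced in $(0,1)$ by $a(0)<0<a(1)$, so at most one remains in $(1,\infty)$, and the sign change at infinity forces exactly one), though as you say the monotonicity route avoids the multiplicity bookkeeping.
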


\begin{lemma}
\label{lem7}
   Let $a, c>0$, then the function
\begin{equation*}
   u(s;a,c)=\frac {1-e^{-ac(1-s)}}{1-e^{-a(1-s)}} \cdot \frac {1-e^{-ac(1+s)}}{1-e^{-a(1+s)}}
\end{equation*}
    is decreasing on $s \in (0,1)$ if $0< c \leq 1$ and increasing on
    $(0,1)$ if $c \geq 1$.
\end{lemma}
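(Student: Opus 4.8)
The plan is to first reduce $u(s;a,c)$ to a simple product of ratios of hyperbolic sines, and then read off the monotonicity from the concavity/convexity of $\log$ of such a ratio. Multiplying out the numerator and using $1+e^{-2A}-2e^{-A}\cosh(As)=2e^{-A}(\cosh A-\cosh(As))$ with $A=ac$ (and likewise with $A=a$ in the denominator), one gets
\begin{align*}
u(s;a,c)=e^{-a(c-1)}\,\frac{\cosh(ac)-\cosh(acs)}{\cosh a-\cosh(as)}.
\end{align*}
Applying the identity $\cosh X-\cosh Y=2\sinh\frac{X+Y}{2}\sinh\frac{X-Y}{2}$ to numerator and denominator and writing $\sigma(x)=\sinh(cx)/\sinh(x)$, this becomes
\begin{align*}
u(s;a,c)=e^{-a(c-1)}\,\sigma\!\left(\tfrac{a(1+s)}{2}\right)\sigma\!\left(\tfrac{a(1-s)}{2}\right),\qquad s\in(0,1).
\end{align*}

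Next I would exploit that the two arguments $p(s)=a(1+s)/2$ and $q(s)=a(1-s)/2$ have constant sum $a$, with $p$ increasing and $q$ decreasing in $s$, and $p>q>0$ on $(0,1)$. Logarithmic differentiation gives
\begin{align*}
\frac{\partial}{\partial s}\log u(s;a,c)=\frac{a}{2}\Bigl[(\log\sigma)'(p)-(\log\sigma)'(q)\Bigr],
\end{align*}
so, since $u>0$, the sign of $\partial u/\partial s$ equals the sign of $(\log\sigma)'(p)-(\log\sigma)'(q)$; hence $u$ is decreasing in $s$ if $\log\sigma$ is concave on $(0,\infty)$ and increasing if $\log\sigma$ is convex there.

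Finally I compute the second derivative. Since $(\log\sigma)'(x)=c\coth(cx)-\coth(x)$,
\begin{align*}
(\log\sigma)''(x)=\frac{1}{\sinh^2 x}-\frac{c^2}{\sinh^2(cx)}=\frac{1}{x^2}\Bigl(\rho(x)^2-\rho(cx)^2\Bigr),\qquad \rho(x):=\frac{x}{\sinh x}.
\end{align*}
A one-line argument ($(\sinh x-x\cosh x)'=-x\sinh x<0$, so $\sinh x-x\cosh x<0$ for $x>0$) shows $\rho$ is strictly decreasing and positive on $(0,\infty)$; therefore $0<c\le 1$ forces $\rho(cx)\ge\rho(x)$, i.e.\ $(\log\sigma)''\le 0$ so $\log\sigma$ is concave, while $c\ge 1$ forces $(\log\sigma)''\ge 0$ so $\log\sigma$ is convex. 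Combined with the previous paragraph this proves the claim (and for $c=1$ the function $u$ is identically $1$, consistent with both assertions, while for $c\ne 1$ the monotonicity is in fact strict). The only real work is the algebraic reduction to the product $\sigma(p)\sigma(q)$; once that is in place the monotonicity of $\rho$ does everything, so I do not anticipate a genuine obstacle — the one point to watch is simply keeping track of the $s$-independent prefactor $e^{-a(c-1)}$, which is harmless.
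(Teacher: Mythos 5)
Your proof is correct and follows essentially the same route as the paper: both logarithmically differentiate $u$ in $s$, write the result as a difference of one auxiliary function evaluated at arguments proportional to $1+s$ and $1-s$, and conclude from that function's monotonicity, whose direction flips at $c=1$ (your $(\log\sigma)'(x)=c\coth(cx)-\coth x$ is, up to the substitution $x\mapsto x/2$, a sign, and an additive constant, the paper's $v_c(x)=1/(e^x-1)-c/(e^{cx}-1)$, so your concavity/convexity of $\log\sigma$ is exactly the paper's monotonicity of $v_c$). The only genuine difference is the final verification: you deduce the sign of $(\log\sigma)''$ from the fact that $x/\sinh x$ is decreasing, whereas the paper differentiates $z(x,c)=c^2e^{cx}/(e^{cx}-1)^2$ with respect to $c$; both steps are correct and of comparable length, and your hyperbolic rewriting of $u$ as $e^{-a(c-1)}\sigma(p)\sigma(q)$ is a pleasant but inessential preliminary.
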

\begin{proof}
    For fixed $c$, it's easy to see that
\begin{align*}
   \frac {u'(s;a,c)}{u(s;a,c)}=a\left (v_c \left (a\left (1-s \right ) \right  )-v_c \left (a\left (1+s \right ) \right  )
   \right),
\end{align*}
   where
\begin{align}
\label{2.3}
   v_c(x)=\frac {1}{e^x-1}-\frac {c}{e^{cx}-1}.
\end{align}
   It's also easy to see that $v'_c(x)=z(x,c)-z(x,1)$ where
\begin{align*}
   z(x,c)=\frac {c^2e^{cx}}{(e^{cx}-1)^2}.
\end{align*}
    Now, we have
\begin{align*}
   \frac {\partial z}{\partial c} =\frac {f(cx)ce^{cx}}{(e^{cx}-1)^3},
\end{align*}
    where $f(t)=(2-t)e^t-(2+t)$. It's then easy to see that $f(t)
    \leq 0$ for $t \geq 0$ and it follows that $v'_c(x) \geq 0$ when
    $0<c \leq 1$ and that $v'_c(x) \leq 0$ when $c \geq 1$.
    We then
    deduce that $u'(s;a,c) \leq 0$ when $0<c\leq 1$ and $u'(s;a,c) \geq 0$ when $c \geq
    1$ and this completes the proof.
\end{proof}
\section{Proof of Theorem \ref{thm1}}
\label{sec 4.2} \setcounter{equation}{0}

   We first prove assertions (1) (a) and (2) (a) of the theorem.
  Note first that if $F_{p, m,n,q}(x; s;c)$ is completely
  monotonic on $(0, +\infty)$, then we have
\begin{align*}
  s \leq \frac
  {(-1)^{m+n}\Delta \psi^{(m-1)}(x;c)\Delta \psi^{(n-1)}(x;c)}{(-1)^{p+q}\Delta \psi^{(p-1)}(x;c)\Delta \psi^{(q-1)}(x;c)}.
\end{align*}
    It then follows easily from the mean value theorem and \eqref{1.2} that we have
\begin{align*}
   \lim_{x \rightarrow +\infty}\frac
  {(-1)^{m+n}\Delta \psi^{(m-1)}(x;c)\Delta \psi^{(n-1)}(x;c)}{(-1)^{p+q}\Delta \psi^{(p-1)}(x;c)\Delta \psi^{(q-1)}(x;c)}=\alpha_{p,m,n,q}.
\end{align*}
   Thus, $s \leq \alpha_{p,m,n,q}$. Similarly, one shows that if $-F_{p, m,n,q}(x; s;c)$ is completely
  monotonic on $(0, +\infty)$, then $s \geq \alpha_{p,m,n,q}$ and this
  proves the ``only if" part of the assertions (1) (a) and (2) (a) of the theorem.

  To prove the ``if" part of the assertions (1) (a) and (2) (a) of the theorem, it's
  easy to see that it suffices to show that $F_{p, m,n,q}(x; \alpha_{p,m,n,q}; c)$ is
completely monotonic on $(0, +\infty)$ when $0< c \leq 1$ and that
$-F_{p, m,n,q}(x; \alpha_{p,m,n,q}; c)$ is completely
  monotonic on $(0, +\infty)$ when $c \geq 1$.

  We first consider the function $F_{p, m,n,q}(x; \alpha_{p,m,n,q}; c)$ with
  $q \geq 1$ following the approach in \cite{A&W}.
  Using the integral
  representations \eqref{1.11} and \eqref{1.10} for the polygamma functions and using $*$ for the
  Laplace convolution, we get
\begin{equation*}
   F_{p, m,n,q}(x; \alpha_{p,m,n,q};c)=\int^{\infty}_0\frac {e^{-xt}}{c^2}g_{p,m,n,q}(t;\alpha_{p,m,n,q})dt,
\end{equation*}
   where
\begin{eqnarray*}
   g_{p,m,n,q}(t;\alpha_{p,m,n,q}) &=& \frac {t^{m-1}(e^{-ct}-1)}{1-e^{-t}} * \frac {t^{n-1}(e^{-ct}-1)}{1-e^{-t}}-\alpha_{p,m,n,q}\frac
{t^{p-1}(e^{-ct}-1)}{1-e^{-t}} * \frac {t^{q-1}(e^{-ct}-1)}{1-e^{-t}} \\
  &=& \int^t_{0}\Big ( (t-s)^{m-1}s^{n-1}-\alpha_{p,m,n,q}(t-s)^{p-1}s^{q-1} \Big )h_c(t-s)h_c(s)ds,
\end{eqnarray*}
  with
\begin{equation}
\label{3.1}
   h_c(s)=\frac {1-e^{-cs}}{1-e^{-s}}.
\end{equation}
   By a change of variable
   $s \rightarrow ts$ we can recast $g(t)$ as
\begin{equation*}
   g_{p,m,n,q}(t;\alpha_{p,m,n,q})=t^{m+n-1}\int^1_{0}\Big ( (1-s)^{m-1}s^{n-1}-\alpha_{p,m,n,q}(1-s)^{p-1}s^{q-1} \Big )
   h_c(t(1-s))h_c(ts)ds.
\end{equation*}
   We now break the above integral into two integrals, one from $0$
   to $1/2$ and the other from $1/2$ to $1$. We
   make a further change of variable $s \rightarrow (1-s)/2$ for the first one and
   $s \rightarrow (1+s)/2$ for the second one. We now combine them
   to get
\begin{align}
\label{3.02}
  &g_{p,m,n,q}(t;\alpha_{p,m,n,q}) \\
  =&\Big(\frac {t}2 \Big )^{m+n-1}\int^1_{0}a(\frac {1+s}{1-s}; p-q, n-q, \alpha_{p,m,n,q})
  (1-s^2)^{q-1}(1-s)^{p-q}
   u(s;t/2,c)ds,  \nonumber
\end{align}
   where the function $a(t;m,n,c)$ is defined as in Lemma
   \ref{lem5} and the function $u(s;a,c)$ is defined as in Lemma \ref{lem7}. Note that $(1+s)/(1-s) \geq 1$ for $0 \leq s <1$ and $p-q>n-q \geq 1$,
   hence by Lemma \ref{lem5}, there is a unique number $0 < s_0< 1$ such
   that
\begin{equation*}
   a \Big ( \frac {1+s_0}{1-s_0}; p-q, n-q, \alpha_{p,m,n,q} \Big)=0.
\end{equation*}
    It follows from $a(1; p-q, n-q, \alpha_{p,m,n,q})>0$ and $\lim_{t
    \rightarrow +\infty}a(t; p-q, n-q, \alpha_{p,m,n,q})<0$ that for $0< s
    \leq  s_0$,
\begin{align*}
   a \Big ( \frac {1+s}{1-s}; p-q, n-q, \alpha_{p,m,n,q} \Big) \geq 0,
\end{align*}
   with the above inequality being reversed when $s_0 \leq s <1$.

   We further note by Lemma \ref{lem7}, the function $u(s;t/2,c)$
   is decreasing on $s \in (0,1)$ when $0<c \leq 1$ and increasing when $c \geq 1$.
   Thus we conclude that when $0<c \leq 1$,
\begin{eqnarray*}
   && a(\frac {1+s}{1-s}; p-q, n-q, \alpha_{p,m,n,q})
  (1-s^2)^{q-1}(1-s)^{p-q}
   u(s;t/2,c) \\
   &\geq & a(\frac {1+s}{1-s}; p-q, n-q, \alpha_{p,m,n,q})
  (1-s^2)^{q-1}(1-s)^{p-q}
  u(s_0;t/2,c),
\end{eqnarray*}
   with the above inequality being reversed when $c \geq 1$.

   Hence when $0<c \leq 1$,
\begin{eqnarray*}
   &&g_{p,m,n,q}(t;\alpha_{p,m,n,q}) \\
   &\geq& \Big(\frac {t}2 \Big )^{m+n+1}u(s_0;t/2,c) \int^1_{0}a(\frac {1+s}{1-s}; p-q, n-q, \alpha_{p,m,n,q})
  (1-s^2)^{q-1}(1-s)^{p-q}ds,
\end{eqnarray*}
   with the above inequality being reversed when $c \geq 1$.

  Note that the integral above is (by reversing the process above on
  changing variables)
\begin{equation*}
   2^{m+n-1}\int^1_{0}\Big ( (1-s)^{m-1}s^{n-1}-\alpha_{p,m,n,q}(1-s)^{p-1}s^{q-1} \Big
   )ds=0,
\end{equation*}
   where the last step follows from the well-known beta function
   identity
\begin{equation*}
   B(x,y)=\int^1_0t^{x-1}(1-t)^{y-1}dt=\frac
   {\Gamma(x)\Gamma(y)}{\Gamma(x+y)}, \hspace {0.1in} x,y>0,
\end{equation*}
   and the well-known fact $\Gamma(n) = (n-1)!$ for $n \geq 1$. It
   follows that $g(t) \geq 0$ when $0<c \leq 1$ and $g(t) \leq 0$
   when $c \geq 1$ and this completes the proof for the ``if" part of the assertions (1) (a) and (2) (a) of Theorem \ref{thm1}
   for $F_{p, m,n,q}(x; \alpha_{p,m,n,q}; c)$ with $q \geq 1$.

   Now we consider the function $F_{p, m,n,q}(x; \alpha_{p,m,n,q};c)$ with $q=0$.
In this case $p=m+n$
   and we note that
\begin{equation*}
   \alpha_{m+n,m,n, 0} = B(m,n)=\int^1_0s^{m-1}(1-s)^{n-1}ds,
\end{equation*}
   and we use this to write
\begin{equation*}
   \alpha_{m+n,m,n,0}\frac {t^{m+n-1}(e^{-ct}-1)}{1-e^{-t}}=\int^t_0\frac
   {s^{m-1}(t-s)^{n-1}(e^{-ct}-1)}{1-e^{-t}}ds.
\end{equation*}
   It follows that
\begin{eqnarray}
\label{3.2}
   && F_{m+n, m,n,0}(x; \alpha_{m+n,m,n,0};c) \\
   &=& \int^{\infty}_0\frac {e^{-xt}}{c^2} \Big (
   \frac {t^{m-1}(e^{-ct}-1)}{1-e^{-t}} * \frac {t^{n-1}(e^{-ct}-1)}{1-e^{-t}} +\alpha_{m+n,m,n,0}\frac {ct^{m+n-1}(e^{-ct}-1)}{1-e^{-t}} \Big
   )dt \nonumber \\
   &=&  \int^{\infty}_0\frac {e^{-xt}}{c^2} \left ( \int^t_0s^{m-1}(t-s)^{n-1}
   \left ( \frac {1-e^{-cs}}{1-e^{-s}}\cdot\frac {1-e^{-c(t-s)}}{1-e^{-(t-s)}}-
   \frac {c(1-e^{-ct})}{1-e^{-t}}
   \right )ds \right ) dt. \nonumber
\end{eqnarray}
    Now we note that, for $h_c(s)$ defined as in \eqref{3.1},
\begin{align*}
    \frac {h'_c(s)}{h_c(s)}=-v_c(s),
\end{align*}
    where $v_c(x)$ is defined as in \eqref{2.3}. It follows from the
    proof of Lemma \ref{lem7} that $h'_c(s)/h_c(s) \leq 0$ when $0 < c
    \leq 1$ and $h'_c(s)/h_c(s) \geq 0$ when $c \geq 1$. We then deduce
    that when $0<c \leq 1$,
\begin{equation*}
  \frac {h'_c(t-s)}{h_c(t-s)}-\frac
  {h'_c(t)}{h_c(t)} \geq 0,
\end{equation*}
  for $t \geq s \geq 0$ with the above inequality being reversed when $c \geq 1$.
  This implies that the function $t \mapsto \ln h_c(t-s)-\ln h_c(t)$ is increasing (resp. decreasing) for $t >s$ when $0< c \leq 1$
  (resp. when $c \geq 1$). Thus we
  obtain that when $0<c \leq 1$,
\begin{equation*}
   \ln h_c(s)+\ln h_c(t-s) - \ln h_c(t) \geq  \lim_{t \rightarrow s^+}(\ln h_c(s)+\ln h_c(t-s) - \ln h_c(t))
   =\ln c,
\end{equation*}
   with the above inequality being reversed when $c \geq 1$. One checks
   easily that this implies that when $0<c \leq 1$,
\begin{align*}
   \frac {1-e^{-c(t-s)}}{1-e^{-(t-s)}} \geq
   \frac {c(1-e^{-ct})}{1-e^{-t}},
\end{align*}
   with the above inequality being reversed when $c \geq 1$. This
   implies the ``if" part of the assertions (1) (a) and (2) (a) of the theorem
   for $F_{m+n, m,n,0}(x; \alpha_{m+n,m,n,0}; c)$.

   Now we prove the assertions (1) (b) and (2) (b) of the theorem.
  Note that if $F_{m+n, m,n,0}(x; s;c)$ is completely
  monotonic on $(0, +\infty)$, then we have
\begin{align*}
  s \leq \frac
  {(-1)^{m+n}\Delta \psi^{(m-1)}(x;c)\Delta\psi^{(n-1)}(x;c)}{(-1)^{m+n+1}\Delta\psi^{(m+n-1)}(x;c)}.
\end{align*}
    It then follows from \eqref{3.4} that we have
\begin{align*}
   \lim_{x \rightarrow 0^+}\frac
  {(-1)^{m+n}\Delta\psi^{(m-1)}(x;c)\Delta\psi^{(n-1)}(x;c)}{(-1)^{m+n+1}\Delta\psi^{(m+n-1)}(x;c)}=\frac {\alpha_{m+n,m,n,0}}{c}.
\end{align*}
    Thus, $s \leq \alpha_{m+n,m,n,0}/c$. Similarly, one shows that if $-F_{m+n, m,n,0}(x; s;c)$ is completely
  monotonic on $(0, +\infty)$, then $s \geq \alpha_{m+n,m,n,0}/c$ and this
  proves the ``only if" part of the assertions (1) (b) and (2) (b) of the theorem.

   To prove the ``if" part of the assertions (1) (b) and (2) (b) of the theorem, it's
  easy to see that it suffices to show that $-F_{m+n, m,n,0}(x; \alpha_{m+n,m,n,0}/c; c)$ is
completely monotonic on $(0, +\infty)$ when $0< c \leq 1$ and that
$F_{m+n, m,n,0}(x; \alpha_{m+n,m,n,0}/c; c)$ is completely
  monotonic on $(0, +\infty)$ when $c \geq 1$.

  Similarly to \eqref{3.2}, we have
\begin{align*}
   & F_{m+n, m,n,0}(x; \frac {\alpha_{m+n,m,n,0}}{c};c) \\
   =&  \int^{\infty}_0\frac {e^{-xt}}{c^2} \left ( \int^t_0s^{m-1}(t-s)^{n-1}
   \left ( \frac {1-e^{-cs}}{1-e^{-s}}\cdot\frac {1-e^{-c(t-s)}}{1-e^{-(t-s)}}-
   \frac {(1-e^{-ct})}{1-e^{-t}}
   \right )ds \right ) dt.
\end{align*}
   For fixed $t>s>0$, define
\begin{align*}
   r_{s,t}(c)=\frac {(1-e^{-cs})(1-e^{-c(t-s)})}{1-e^{-ct}}.
\end{align*}
    Then we have
\begin{align*}
   c\frac {r'_{s,t}(c)}{r_{s,t}(c)}=\frac {cs}{e^{cs}-1}+\frac
   {c(t-s)}{e^{c(t-s)}-1}-\frac {ct}{e^{ct}-1}>0,
\end{align*}
   as it's easy to see that the function $x \mapsto x/(e^x-1)$ is
   decreasing for $x > 0$. It follows that the function $r_{s,t}(c)$
   is an increasing function of $c$ so that $r_{s,t}(c) \leq r_{s,t}(1)$ when
   $0<c \leq 1$ and $r_{s,t}(c) \geq r_{s,t}(1)$ when
   $c \geq 1$. One sees easily that the ``if" part of the assertions (1) (b) and (2) (b) of the
   theorem follows from this.

   Lastly, we prove assertion (3) of the theorem. This is similar
   to our proof above of the ``if" part of the assertions (1) (a) and (2) (a) of the theorem for $F_{p, m,n,q}(x; \alpha_{p,m,n,q}; c)$ with
  $q \geq 1$, except that we replace $\alpha_{p,m,n,q}$ by $\beta_{p,m,n,q}$ and recast the function $g_{p,m,n,q}(t;\beta_{p,m,n,q})$ similar to
  \eqref{3.02} as
\begin{align*}
  &g_{p,m,n,q}(t;\beta_{p,m,n,q}) \\
  =&\Big(\frac {t}2 \Big )^{m+n+1}\int^1_{0}a(\frac {1+s}{1-s}; p-q, n-q, \beta_{p,m,n,q})
  (1-s^2)^{q}(1-s)^{p-q}\left(\frac {t^2}{4}(1-s^2) \right )^{-1}
   u(s;t/2,c)ds.
\end{align*}
   It is then easy to show using the method in the proof of Lemma \ref{lem7} that the function
   $s \mapsto a^{-2}(1-s^2)^{-1}u(s;a,c)$ is increasing on $s\in (0, 1)$ when $c >0$ and essentially repeating the rest of the proof of
   the ``if" part of the assertions (1) (a) and (2) (a) of the theorem allows us to
   establish assertion (3) of the theorem.


\end{document}